\begin{document}
\parskip=6pt
\numberwithin{equation}{section}
\newtheorem{theorem}{Theorem}[section]
\newtheorem{corollary}[theorem]{Corollary}
\newtheorem{lemma}[theorem]{Lemma}
\newtheorem{definition}[theorem]{Definition}





\newcommand{\cF}{\cal F}
\newcommand{\cA}{\cal A}
\newcommand{\cC}{\cal C}
\newcommand{\cO}{\cal O}
\newcommand{\cJ}{\cal J}
\newcommand{\var}{\varepsilon}
\newcommand{\bC}{\mathbb C}
\newcommand{\bP}{\mathbb P}
\newcommand{\bN}{\mathbb N}
\newcommand{\bA}{\mathbb A}
\newcommand{\bR}{\mathbb R}
\newcommand{\fU}{\frak U}
\newcommand{\hol}{\text{hol}}
\newcommand{\Hom}{\text{Hom}}
\newcommand{\id}{\text{id}}
\newcommand{\Ker}{\text{Ker }}
\renewcommand\qed{ }
\begin{titlepage}
\title{\bf A Maximum Principle for Hermitian (and other) Metrics\thanks{Research partially supported  by  NSF grant DMS-1162070 \newline
2010 Mathematics Subject Classification 32L10, 32Q99, 32U05}}
\author{L\'aszl\'o Lempert\\ Department of  Mathematics\\
Purdue University\\West Lafayette, IN
47907-2067, USA}
\end{titlepage}
\date{}
\maketitle
\abstract
We consider homomorphisms of hermitian holomorphic Hilbert bundles. Assuming the homomorphism decreases curvature, we prove that its pointwise norm is plurisubharmonic.

\endabstract

\section{Introduction}

Suppose $E\to S$ is a holomorphic Hilbert bundle endowed with hermitian metrics $h^1$ and $h^2$, such that the curvature of $h^2$ is less than the curvature of $h^1$, in a sense to be made precise below. In this paper we will show that this implies that $h_2:h_1$ satisfies the maximum principle: if $U\subset S$ is a  relatively compact subdomain, $\partial U\ne\emptyset$, and $h^2\le h^1$ over $\partial U$, then $h^2\le h^1$ over $U$. A variant of this result, when $h^1$ or $h^2$ is flat, was key in Berndtsson's solution of the openness conjecture of Demailly--Koll\'ar \cite{B2}, \cite{DK}. Berndtsson tells me that for finite rank bundles he was aware of the maximum principle for non-flat metrics as well, in fact, the proof he suggested for [BK, Lemma 8.11] works in general finite rank bundles.

The natural framework of this sort of problems is even more general than Hilbert bundles. We will set up this framework and formulate our main result, Theorem 2.4, in section 2. Here we state a special case that applies to homomorphisms of Hilbert bundles.
The meaning of the notions involved should be at least intuitively clear by analogy with finite rank bundles; at any rate the terms will be defined in section 2.

\begin{theorem}Let $S$ be a complex manifold, and $E, E'\to S$ holomorphic  Hilbert bundles endowed with hermitian metrics $h,h'$ of class $C^2$.  Let $R$ and $R'$ denote the curvature operators of the (Chern connections defined by the) metrics, so that e.g. $R:(\bC\otimes TS)^2\to {\text{End }}H$. Let $A:E\to E'$ be a holomorphic homomorphism, and let $\|A\|:S\to [0,\infty)$ denote its pointwise operator norm. If $A$ decreases curvature: for $\xi\in T_s^{1,0}S$ and $v\in E_s$ such $Av\ne 0$
\begin{equation}
\frac{h'\left(R'(\xi,\bar\xi)Av,Av\right)}{h'(Av, Av)}\le\frac{ h\left(R(\xi,\bar\xi)v,v\right)}{h(v,v)},
\end{equation} 
then $\log\|A\|$ is plurisubharmonic, hence satisfies the maximum principle. 
\end{theorem}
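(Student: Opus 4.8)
The plan is to reduce to one complex dimension and then to run the curvature inequality through the Chern connection. Plurisubharmonicity is tested on complex lines, and for a holomorphic map $f\colon\Delta\to S$ the bundles $f^{*}E,f^{*}E'$ carry the pulled back metrics, $f^{*}A$ is again a holomorphic homomorphism, the curvature operators pull back, and pointwise norms are unchanged; thus the curvature-decreasing hypothesis is inherited, and it suffices to show that $g:=\log\|A\|$ is subharmonic when $S=\Delta\subset\bC$, i.e.\ that $g(0)\le\frac1{2\pi}\int_{0}^{2\pi}g(re^{i\theta})\,d\theta$ for small $r>0$. Points $z_0$ with $A(z_0)=0$ are harmless, since there $g(z_0)=-\infty$; so assume $A(0)\ne0$.

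Fix $\var>0$, pick a unit vector $v_{0}\in E_{0}$ with $\|A(0)v_{0}\|>e^{-\var}\|A(0)\|$, and extend $v_{0}$ to a holomorphic section $v$ of $E$ near $0$ with $v(0)=v_{0}$ and $\nabla_{\partial_{z}}v(0)=0$ (arranged by taking $v$ affine in $z$ in a holomorphic frame). For $r$ so small that $v$ and $Av$ are nonvanishing on $\bar\Delta_{r}$, put $\varphi:=\log(\|Av\|^{2}/\|v\|^{2})$, so that $\tfrac12\varphi\le g$ and $\tfrac12\varphi(0)>g(0)-\var$. For a holomorphic section $s$ of a hermitian holomorphic bundle set $\rho(s):=\langle R(\partial_{z},\partial_{\bar z})s,s\rangle/\|s\|^{2}$ and $Q(s):=(\|\nabla_{\partial_{z}}s\|^{2}\|s\|^{2}-|\langle\nabla_{\partial_{z}}s,s\rangle|^{2})/\|s\|^{4}$, which is $\ge0$ by Cauchy--Schwarz. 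The Bochner-type identity $\partial_{z}\partial_{\bar z}\log\|s\|^{2}=-\rho(s)+Q(s)$, applied to $s=Av$ in $E'$ and $s=v$ in $E$ and subtracted (using $\nabla'_{\partial_{z}}(Av)=(\nabla'_{\partial_{z}}A)v+A\nabla_{\partial_{z}}v$), gives
\[
\partial_{z}\partial_{\bar z}\varphi=\bigl(\rho(v)-\rho'(Av)\bigr)+\bigl(Q'(Av)-Q(v)\bigr),
\]
where $\rho',Q'$ refer to $E'$. The first bracket is $\ge0$ by (1) with $\xi=\partial_{z}$; $Q'(Av)\ge0$; and $Q(v)(0)=0$ because $\nabla_{\partial_{z}}v(0)=0$. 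Hence $\partial_{z}\partial_{\bar z}\varphi(0)\ge0$. (Invariantly, $\varphi=\log\|A|_{L}\|^{2}$ for the line bundle isomorphism $A|_{L}\colon L\to A(L)$, $L=\bC v$, and the identity expresses $\partial_{z}\partial_{\bar z}\varphi$ as the curvature of $L$ minus that of $A(L)$ along $\partial_{z}$.)

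The obstacle is that the preceding controls $\partial_{z}\partial_{\bar z}\varphi$ only at the one point $0$: the normalization $\nabla_{\partial_{z}}v(0)=0$ cannot be propagated, and away from $0$ the term $Q(v)$ may outweigh $\rho(v)-\rho'(Av)+Q'(Av)$, so $\varphi$ need not be subharmonic near $0$, whereas the sub-mean-value inequality for $\varphi$ at $0$ is what is needed. I would get it by localization: Jensen's formula gives $\frac1{2\pi}\int\varphi(re^{i\theta})\,d\theta-\varphi(0)=c\int_{\Delta_{r}}\log\frac{r}{|z|}\,\partial_{z}\partial_{\bar z}\varphi\,dA$ with $c>0$; since $Q(v)=O(|z|^{2})$ its contribution is $O(r^{4})$, while when $\rho(v)-\rho'(Av)$ or $Q'(Av)$ is strictly positive at $0$ the nonnegative part contributes $\gtrsim r^{2}$ and dominates for small $r$. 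The residual case — equality in (1) at $v_{0}$ together with $\nabla'_{\partial_{z}}(Av)(0)$ parallel to $A(0)v_{0}$ — is, I expect, the technical heart; one handles it by also annihilating higher covariant jets of $v$ at $0$ and re-running the estimate, or by the alternative below. Granting the inequality, $g(0)-\var<\tfrac12\varphi(0)\le\frac1{2\pi}\int g(re^{i\theta})\,d\theta+o_{r}(1)$, and letting $\var,r\to0$ yields subharmonicity, hence $\log\|A\|$ is plurisubharmonic and satisfies the maximum principle.

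A cleaner route to most of the statement: $\|A\|^{2}=\lim_{m}\|S^{m}A\|_{HS}^{2/m}$, where $S^{m}A\colon S^{m}E\to S^{m}E'$ and $\|\cdot\|_{HS}$ is the Hilbert--Schmidt norm; this limit equals $\inf_{m}$ and can be taken decreasing along $m=k!$ (by submultiplicativity of $\|S^{m}A\|_{HS}^{2}$), and a decreasing limit of plurisubharmonic functions is plurisubharmonic, so it is enough to prove each $\log\|S^{m}A\|_{HS}$ plurisubharmonic. For $m=1$ this is immediate: on $\Hom(E,E')$ with the Hilbert--Schmidt metric one has $\partial_{z}\partial_{\bar z}\log\|A\|_{HS}^{2}\ge-\langle R_{\Hom}(\partial_{z},\partial_{\bar z})A,A\rangle_{HS}/\|A\|_{HS}^{2}$, and diagonalizing $R(\partial_{z},\partial_{\bar z})$ by an orthonormal eigenbasis $e_{i}$ with eigenvalues $\mu_{i}$ gives $\langle R_{\Hom}A,A\rangle_{HS}=\sum_{i}(\langle R'Ae_{i},Ae_{i}\rangle-\mu_{i}\|Ae_{i}\|^{2})\le0$ by (1). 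What then needs care — and where I expect the main work — is that $S^{m}A$ with the induced metrics again satisfies (1), at least in this trace-averaged form.
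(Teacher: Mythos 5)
Your first route is, in outline, the paper's own proof: the same test sections (affine in a holomorphic frame, normalized so that $\nabla_{\partial_z}v(0)=0$), the same identity $\partial_z\partial_{\bar z}\varphi=(\rho(v)-\rho'(Av))+(Q'(Av)-Q(v))$, and the same observation that $Q(v)=O(|z|^2)$ forces the mean-value deficit of $\varphi$ to be $\ge -Cr^4$. But the proposal goes astray exactly where you sense trouble. There is no ``residual case'': you do not need the exact sub-mean-value inequality for $\varphi$, only the inequality with an $O(r^4)=o(r^2)$ error, so no case analysis on whether $\rho(v)-\rho'(Av)$ or $Q'(Av)$ is strictly positive is required. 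Moreover the fix you propose for that case --- annihilating higher covariant jets of $v$ at $0$ --- is impossible in general: $\bar\partial(\nabla_{\partial_z}v)=-R(\partial_z,\partial_{\bar z})v$ for holomorphic $v$, so $\nabla_{\partial_z}v$ cannot vanish to second order at $0$ unless the curvature kills $v_0$. What \emph{is} needed, and what you omit, is that the constant in $Q(v)\le C|z|^2$ can be taken independent of the unit vector $v_0$ (it depends only on the $C^1$-modulus of the connection form near $0$, since $v(z)=v_0-z\theta(0)v_0$ has coefficients bounded by $\|v_0\|$). With that uniformity you take the supremum over $v_0$ \emph{first}, for fixed $r$, and obtain $\frac1{2\pi}\int g(re^{i\theta})\,d\theta\ge g(0)-Cr^4$ with no $\var$ left over. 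Your order of limits, with only ``$+o_r(1)$'' on the right, proves nothing: $g(z)=-|z|$ satisfies $g(0)\le\frac1{2\pi}\int g(re^{i\theta})\,d\theta+o_r(1)$ and is not subharmonic. Finally, even from the correct inequality $g(0)\le\frac1{2\pi}\int g+Cr^4$ one only reads off that the generalized ($\limsup$, $r^{-2}$-normalized) Laplacian of $g$ is $\ge0$ at every point; converting this pointwise statement into subharmonicity of the merely continuous function $g$ is a theorem (Saks--Littlewood, or a barrier argument with $g+\delta|z|^2$), which the paper isolates as its Lemma 2.3 and which your write-up skips.

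The ``cleaner route'' does not work in the setting of the theorem: for Hilbert bundles of infinite rank $S^mA$ is in general not Hilbert--Schmidt (already $A=\mathrm{id}$ fails), the spectral decomposition of the bounded self-adjoint operator $R(\partial_z,\partial_{\bar z})$ need not provide an orthonormal eigenbasis, and, as you acknowledge, the inheritance of the trace-averaged curvature inequality by $S^mA$ --- the actual content --- is left unproved even in finite rank. I would discard it and repair the first argument as above.
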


The maximum principle that Berndtsson proved corresponds to $E=E'$, $A=\text{id}$ (and $h$ flat).

As far as we can tell, Ahlfors was the first to notice the connection between curvature estimates for holomorphic maps and subharmonicity, which he exploited in \cite{A} to obtain a generalization of Schwarz's lemma.
The principle is the same here. The reader will notice that the conclusions in \cite{A} and in subsequent generalizations, e.g. by Royden and Yau \cite{R,Y}, are stronger than in our theorem in that they estimate $\|A\|$. This is because Ahlfors--Schwarz type results concern tangent bundles, the size of curvature is measured differently than in (1.1), and  as a result, the hypothesis of decreasing curvature is also somewhat different.

\section{Metrics on Bundles and on Sheaves of Infinite Rank}

A smooth, respectively holomorphic, Banach bundle is a smooth, respectively holomorphic, map $\pi:E\to S$ of smooth, respectively complex, Banach manifolds, with each fiber $E_s=\pi^{-1}\{s\}$ endowed with the structure of a complex vector space. It is required that for each $s\in S$ there be a neighborhood $U\subset S$, a complex Banach space $W$, and a diffeomorphism, respectively biholomorphism, $F:U\times W\to \pi^{-1}U$ (local trivialization) that maps $\{t\}\times W$ linearly on $E_t$, $t\in U$. Thus any holomorphic Banach bundle is automatically a smooth Banach bundle. We will only consider bundles over finite dimensional bases $S$ (although most of what follows will hold when $S$ is any Banach manifold, in particular, Theorem 2.4 will hold). When the Banach spaces $W$ above are Hilbert, we speak of a Hilbert bundle. For $W,Z$ Banach spaces we write $\text{Hom}(W,Z)$ for the Banach space of bounded linear operators $W\to Z$, endowed with the operator norm, and let End $W=\text{Hom}(W,W)$.

A connection $\nabla$ of class $C^k$, $k=0,1,\dots$, on a smooth Banach bundle $\pi: E\to S$ maps germs of $C^1$ sections of $E$ to germs of $E$-valued one forms. It is required that for trivializations $F:U\times W\to \pi^{-1}U$ as above there should exist an End $W$-valued one form $\theta$ on $U$, of class $C^k$, such that for $f\in C^1(U;W)$ corresponding to the section $\varphi(t)=F(t,f(t))$, and a vector $\xi\in\bC\otimes T_sU,\ s\in U$
$$
(\nabla\varphi)(\xi)=F(s, \xi f+\theta(\xi)f).$$
We also write $\nabla_{\xi}\varphi$ for $(\nabla\varphi)(\xi)$.

A connection $\nabla$ on a holomorphic Banach bundle is compatible with the holomorphic structure if the connection forms $\theta$ corresponding to holomorphic local trivializations are of type $(1,0)$, i.e., $\theta(\xi)=0$ when $\xi\in T^{0,1}U$.

A metric on a smooth Banach bundle is a locally uniformly continuous function $q:E\to[0,\infty)$ that restricts on each fiber $E_s$ to a norm that induces the topology of $E_s$. This amounts to requiring that with local trivializations $F:U\times W\to \pi^{-1}U$ the norms $\|\ \ \|_s=(q\circ F)(s,\cdot)$ on $W$ be all equivalent, and as $s$ varies, locally uniformly so.

Suppose now that $E\to S$ is a smooth Hilbert bundle. A hermitian metric on $E$, of class $C^k$, is a function $h:E\oplus E=\coprod_{s\in S} E_s\oplus E_s\to \bC$. We require for each local trivialization $F:U\times W\to \pi^{-1}U$ that there exist a $C^k$ map $P:U\to {\text{End}}\,  W$, taking values in invertible self adjoint operators, such that
\begin{equation}
h\left(F(s,w_1), F(s, w_2)\right)=\left\langle P(s)w_1, w_2\right\rangle,
\end{equation}
where $\langle\ , \rangle$ is the inner product on $W$. 

A metric $h$ of class $C^1$ on a holomorphic Hilbert bundle determines a Chern connection $\nabla$, the unique connection compatible both with the holomorphic structure and the metric, the latter in the sense that 
\begin{equation}
\xi h(\varphi,\psi)=h(\nabla_\xi\varphi,\psi)+h(\varphi, \nabla_{\bar\xi}\psi)
\end{equation}
for $C^1$ sections $\varphi,\psi$. Given a local trivialization and the corresponding $P:U\to {\text{End}}\, W$ as in (2.1), the connection form is $\theta=P^{-1}\partial P$.

Suppose now the metric $h$ is of class $C^2$, and $\xi\in T^{1,0}_s S$. The curvature operator
$$
R(\xi, \bar{\xi})=\nabla_\xi\nabla_{\bar\xi}-\nabla_{\bar\xi}\nabla_\xi,
$$
computed by first extending $\xi$ to a holomorphic vector field in a neighborhood of $s$, is independent of the extension, and defines a bounded linear operator on $E_s$ (see e.g. [LSz, Lemma 2.2.4]). If $\varphi,\psi$ are nonvanishing holomorphic sections near $s$, then from (2.2) one computes
\begin{multline*}
 \xi\bar\xi\log h(\varphi,\psi)= \\
-\frac{h(\varphi,R(\xi,\bar\xi)\psi)}{ h(\varphi,\psi)}
+\frac{h(\nabla_\xi\varphi,\nabla_\xi\psi)h(\varphi,\psi)-h(\nabla_\xi\varphi,\psi)h(\varphi,\nabla_\xi\psi)}{ h(\varphi,\psi)^2}.
\end{multline*}
The left hand side will not change if the order of $\xi$ and $\bar\xi$ is changed; on the right only the numerator of the first term will change to $h(R(\xi,\bar\xi){\varphi,\psi})$. This shows $R\left(\xi,\bar\xi\right)$ is self adjoint. As a special case we obtain
\begin{multline}
\xi\bar\xi\log h(\varphi,\varphi)=\\-\frac{h(\varphi,R(\xi,\bar\xi)\varphi)}{h(\varphi,\varphi)}
+\frac{h(\nabla_\xi\varphi,\nabla_\xi\varphi) h(\varphi,\varphi)-|h(\nabla_\xi\varphi,\varphi)|^2}{h(\varphi,\varphi)^2}.
\end{multline}

Curvature can be defined in much greater generality  than Hilbert bundles: for sheaves of $\cal O$-modules over complex manifolds, endowed with a not necessarily hermitian metric. Let $S$ be a (finite dimensional) complex manifold, ${\cF}\to S$ a sheaf of $\cal O$-modules, and ${\cal P}\to S$ the sheaf of germs of nonnegative continuous functions. Thus $\cal P$ is a partially ordered semiring; in the partial order $u\le v$ for $u, v\in{\cal P}_s$ means $v-u\in{\cal P}_s$.

\begin{definition}A metric on $\cal F$ is a morphism $p: \cal F\to\cal P$ of sheaves (considered as sheaves of sets) satisfying for $s\in S$ and $\varphi,\psi\in{\cal F}_s, f\in{\cal O}_s$
\begin{equation}
p(\varphi+\psi)\le p(\varphi)+p(\psi), \qquad p(f\varphi)=|f|p(\varphi),
\end{equation}
 and $\varphi=0$ if and only if $p(\varphi)=0$.
\end{definition}

The simplest example is $\cal F$ the sheaf of holomorphic sections of a holomorphic Banach bundle $E\to S$ that is endowed with a metric $q$, and $p(\varphi)=q(\varphi)$. More complicated examples arise from direct images of, say, hermitian holomorphic Hilbert bundles $(E, h)\to T$ under holomorphic submersions $\sigma:T\to S$. Given a continuous form $\nu$ on $T$ that restricts to volume forms on the fibers $\sigma^{-1}s, s\in S$, for each open $U\subset S$ and $\Phi\in{\cO} (\sigma^{-1} U, E)$, define 
${\widetilde p}(\Phi)(s)=\left(\int_{\sigma^{-1}s} h(\Phi,\Phi)\nu\right)^{1/2}$. Let 
\[
{\cF}(U)=\{\Phi\in{\cO}(\sigma^{-1}U, E):  {\widetilde p}(\Phi):U\to [0,\infty)\  \text{is continuous} \}.
\]
(It is not completely obvious that ${\cal F}(U)$ is closed under addition, but it is not hard to verify, either, see [LSz, Lemma 6.2.1].) By passing to germs, $\widetilde p$ induces a metric $p$ on the sheaf $\cF$ associated with the presheaf $ U\mapsto{\cal F}(U)$.

Suppose $\cal F$, ${\cal F}'\to S$ are sheaves of $\cal O$-modules with metrics $p,p'$, $U\subset S$, and $\alpha:{\cal F}|U\to {\cal F'}| U$ is a homomorphism. The norm $\|\alpha\|$ of $\alpha$ is a function $U\to [0,\infty]$
$$\|\alpha\|(s)=\inf\{a\in[0,\infty): p'(\alpha\varphi)(s)\le a p(\varphi)(s)\  \text{for all}\  \varphi\in{\cal F}_s\}.
$$
By a homomorphism $({\cal F}|U,p)\to ({\cal F'}|U,p')$ we mean a homomorphism $\alpha:{\cal F} | U\to {\cal F'}|U$ such that $\|\alpha\|:U\to [0,\infty)$ is continuous. Germs of such homomorphisms constitute a sheaf of $\cal O$-modules denoted {\bf{Hom}}$(({\cal F},p),({\cal F'},p'))$. The norm $\|\ \|$ induces a metric on this sheaf in the sense of Definition 2.1.

Suppose $E, E'\to S$ are holomorphic Banach bundles endowed with metrics $q,q'$, and $({\cal F}, p), ({\cal F'},p')$ are the sheaves of their holomorphic sections. Any bundle homomorphism $A:E\to E'$ induces, by composition, a homomorphism $({\cal F}, p)\to ({\cal F'}, p')$. 
Conversely (but here we need dim$S<\infty$):

\begin{lemma}Any homomorphism $\alpha:{\cal F}\to {\cal F'}$ is induced by a bundle homomorphism $E\to E'$. In particular, for such sheaves any germ of a homorphism $\cal F\to\cal F'$ is automatically in ${\bf{Hom}}(({\cal F},p),({\cal F'},p'))$.
\end{lemma}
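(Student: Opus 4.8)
The plan is to prove the assertion locally and patch. Fix $s_0\in S$; choose holomorphic coordinates $z_1,\dots,z_n$ ($n=\dim S$) and holomorphic trivializations over a coordinate polydisc $\Delta\ni s_0$, so that $E|\Delta\cong\Delta\times W$ and $E'|\Delta\cong\Delta\times W'$ with $W,W'$ Banach spaces, and $\cF|\Delta,\ \cF'|\Delta$ become the sheaves $U\mapsto\cO(U,W),\ U\mapsto\cO(U,W')$, on which $\alpha$ is $\cO_\Delta$-linear. I will produce a holomorphic $A\colon\Delta\to\Hom(W,W')$ whose induced sheaf morphism is $\alpha|\Delta$; the construction forces $A$ uniquely from $\alpha$, so the local $A$'s agree on overlaps and glue to a bundle homomorphism $E\to E'$ inducing $\alpha$ (this also makes $\|\alpha\|=\|A(\cdot)\|$ continuous, giving the ``in particular'' statement). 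Two things need work: that $\alpha$ is \emph{local} (induced by a bundle map, not e.g.\ by a differential operator) --- this is where $\dim S<\infty$ is essential --- and that the resulting operators $A(s)$ are \emph{bounded}.

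For locality, set $A(s)w:=(\alpha\varphi_w)(s)$ for $s\in\Delta$, $w\in W$, where $\varphi_w$ is the germ of the constant section with value $w$; this is $\bC$-linear in $w$ because $\alpha$ is additive and $\bC$-linear. I claim $(\alpha\varphi)(s)=A(s)\varphi(s)$ for \emph{every} germ $\varphi\in\cF_s$. Indeed $\varphi-\varphi_{\varphi(s)}$ vanishes at $s$, so Hadamard's lemma in the coordinates $z_j-z_j(s)$ writes it as $\sum_{j=1}^{n}(z_j-z_j(s))\psi_j$ with $\psi_j\in\cF_s$; applying the $\cO$-linear $\alpha$ and evaluating at $s$ annihilates every summand, whence $(\alpha\varphi)(s)=(\alpha\varphi_{\varphi(s)})(s)=A(s)\varphi(s)$. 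In particular $A$ is determined by $\alpha$, and once $A(s)\in\Hom(W,W')$ with $\|A(\cdot)\|$ locally bounded, holomorphy of $A$ is automatic: for fixed $w$ the function $s\mapsto A(s)w=(\alpha\varphi_w)(s)$ is already a holomorphic $\cF'$-section, so the Cauchy integral formula for Banach-space-valued functions produces the Taylor coefficients of $A$ at $s_0$ as bounded operators.

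The crux is the boundedness, which I would obtain from continuity of the linear map $L\colon W\to\cO(\Delta,W')$, $L(w):=\alpha\varphi_w$: since $A(s)=\operatorname{ev}_s\circ L$, continuity of $L$ makes $A(s)$ bounded and makes $\{L(w):\|w\|\le1\}$ bounded in the Fr\'echet space $\cO(\Delta,W')$, hence uniformly bounded on compacta, so $\|A(\cdot)\|$ is locally bounded. (If one assumes the homomorphism has continuous norm this is immediate, as testing on $\varphi_w$ gives $\|A(s)w\|\le\|\alpha\|(s)\|w\|$.) To prove $L$ continuous in general I would use the closed graph theorem: let $w_k\to0$ in $W$ with $L(w_k)\to g$ in $\cO(\Delta,W')$, and show $g=0$. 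A direct passage to the limit in $A(s)w_k=(\alpha\varphi_{w_k})(s)$ is circular, since we do not yet know $A(s)$ is continuous; so instead I encode the sequence into a single honest section along a coordinate direction. Rescaling $z_1$, assume $\Delta=\{|z_1|<2\}\times\Delta''$ with $\Delta''$ a polydisc in $z_2,\dots,z_n$, write $z=(z_1,z')$, pass to a subsequence with $\|w_k\|\le 3^{-k}$, and set $\Phi:=\sum_{k\ge1}w_k\,z_1^{\,k}\in\cO(\Delta,W)$ (it converges for $|z_1|<3$). For each $N$, $\Phi=\sum_{k\le N}z_1^{\,k}\varphi_{w_k}+z_1^{\,N+1}\rho_N$ with $\rho_N\in\cO(\Delta,W)$, so by $\cO$-linearity $\alpha\Phi=\sum_{k\le N}z_1^{\,k}L(w_k)+z_1^{\,N+1}\alpha\rho_N$; matching the $z_1$-Taylor coefficients of the \emph{fixed} holomorphic function $\alpha\Phi$ (the last term affects only degrees $>N$) and letting $N\to\infty$ gives $(\alpha\Phi)(z_1,z')=\sum_{k\ge1}z_1^{\,k}L(w_k)(z_1,z')$ for $|z_1|<1$, while the left side is holomorphic in $z_1$ on $\{|z_1|<2\}$. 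Since $L(w_k)(\xi,z')\to g(\xi,z')$ uniformly for $\xi\in[0,1]$ and $g$ is continuous, Abel summation yields $(1-\xi)\sum_{k\ge1}\xi^{k}L(w_k)(\xi,z')\to g(1,z')$ as $\xi\to1^-$; but this equals $(1-\xi)(\alpha\Phi)(\xi,z')$, which tends to $0$ because $\alpha\Phi$ is holomorphic and hence finite at $z_1=1$. So $g(1,z')=0$; re-running with $\Phi$ expanded about $z_1=c$, $|c|<\tfrac12$, gives $g\equiv0$ on an open subset of $\Delta$, hence $g\equiv0$. I expect this generating-section/Abel device --- which circumvents the circularity in the closed-graph verification --- to be the main obstacle; the remaining ingredients (Hadamard's lemma, the Cauchy formula, patching) are routine.
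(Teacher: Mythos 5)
Your argument is, as far as I can check, correct, but it is a genuinely different route from the paper's, because the paper offers no proof at all: after reducing to local trivializations it simply quotes Masagutov's theorem [M, Theorem 1.1], which is precisely the statement that every $\cO$-module homomorphism between sheaves of $W$- and $W'$-valued holomorphic functions on a domain in $\bC^n$ is induced by a holomorphic $\Hom(W,W')$-valued function. So what you have produced is a self-contained (re)proof of the cited theorem, and the two nonroutine points are exactly the ones you isolate. The locality step via Hadamard's lemma is valid for Banach-valued germs in finitely many variables (this is where $\dim S<\infty$ enters) and correctly reduces everything to the constant germs $\varphi_w$. The boundedness step, which is the real content, goes through: the subsequence with $\|w_k\|\le 3^{-k}$ makes $\Phi=\sum_k w_kz_1^k$ an honest section on a polydisc strictly containing $\Delta$; the coefficient matching is legitimate because $z_1^{N+1}\alpha\rho_N$ only affects $z_1$-degrees $>N$, and the resulting identity $\alpha\Phi=\sum_k z_1^kL(w_k)$ on $\{|z_1|<1\}$ holds because the convergent sequence $L(w_k)$ is bounded on compacta, so the series on the right converges locally uniformly and its Taylor coefficients may be computed termwise; the Abel--Tauberian evaluation as $\xi\to 1^-$ and the recentering over $|c|<\tfrac12$ then force $g$ to vanish on an open set, hence identically. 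The remaining assertions (holomorphy of $A$ from strong holomorphy plus local boundedness, gluing via uniqueness of the fiber maps, and continuity of $\|\alpha\|=\|A(\cdot)\|$ from the local uniform continuity and equivalence built into the definition of a metric) are routine as you say. What your approach buys is independence from [M] and an explicit mechanism (the generating-section/Abel device) for the closed-graph verification, which is the step usually requiring care in this infinite-rank setting; what the paper's citation buys is brevity and, in [M], a treatment that also records the limits of such statements for general Banach fibers. If you write this up, the only places I would flesh out are the uniform-boundedness-on-compacta justification for the termwise Taylor expansion and the continuity of $s\mapsto\|A(s)\|$ with respect to the varying fiber norms $q,q'$.
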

This is [M, Theorem 1.1] when $E, E'$ are trivial, whence the general case follows immediately.

For $s\in S$ we denote by ${\cal J}(s)\subset\cal O$ the subsheaf of germs vanishing at $s$. If ${\cal F}\to S$ is a sheaf of $\cal O$-modules and $p$ a metric on it, (2.4) implies that $p(\varphi)(s)=0$ when $\varphi\in({\cal J}(s){\cal F})_s$. Hence $p$ induces a seminorm $p^s$ on the vector space ${\cal F}^s={\cal F}_s/({\cal J}(s){\cal F})_s$. Any homomorphism $\alpha:({\cal F},p)\to({\cal F'},p')$ induces continuous linear maps $\alpha^s:({\cal F}^s,p^s)\to({\cal F'}^{s},p'^{s})$. Suppose $({\cal F}, p)$ is the sheaf of holomorphic sections of a holomorphic Banach bundle $E\to S$ endowed with a metric $q$. It is easy to check that then $({\cal F}^s,p^s)$ is canonically isometrically isomorphic to $(E_s,q|E_s)$.

To define the curvature of a metric on a sheaf $\cal F$ we need to measure pointwise the Levi form of a not necessarily smooth function. Let us start with an open $D\subset\bC$, an upper semicontinuous $u:D\to\bR$, and define
\begin{equation}
\Lambda u(z_0)=\limsup_{r\to 0} r^{-2}\big(\int^1_0 u(z_0+re^{2\pi i\tau})d\tau -u(z_0)\big),
\end{equation}
$z_0\in D$. The definition is implicit in \cite{S}. Thus $\Lambda u=\partial^2u/\partial z\partial \bar z$ whenever $u\in C^2(D)$, and then $\limsup$ turns into $\lim$. If now $S$ is a complex manifold, $u:S\to \bR$ is upper semicontinuous, and $\xi\in T^{1,0}_s S$, define
$$
\xi\bar \xi u=\inf\{\Lambda(u\circ f)(0)\in [-\infty,\infty]\}
$$
the $\inf$ taken over holomorphic maps $f$ of some neighborhood of $0\in \bC$ to $S$ that map $\partial/\partial z$ to $\xi$.

\begin{lemma}An upper semicontinuous $u:S\to\bR$ is plurisubharmonic if and only if $\xi\bar\xi u\ge 0$ for all $\xi\in T^{1,0} S$.
\end{lemma}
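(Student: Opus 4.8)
The plan is to verify the two implications separately, using throughout the standard fact that an upper semicontinuous $u:S\to\bR$ is plurisubharmonic if and only if $u\circ f$ is subharmonic for every holomorphic map $f$ from an open subset of $\bC$ into $S$, and recalling that subharmonicity of an u.s.c. $v$ on an open subset of $\bC$ is equivalent to the maximum principle relative to harmonic functions: for every closed disc $\overline D$ in the domain and every $w$ continuous on $\overline D$ and harmonic in the interior $D$ with $v\le w$ on $\partial D$, one has $v\le w$ in $D$. Two elementary remarks about the operator $\Lambda$ of (2.6) will be used repeatedly. First, if $v$ is u.s.c. near $z_0$ and has a local maximum at $z_0$, then for all small $r$ the quantity under the $\limsup$ in (2.6) is $\le 0$, so $\Lambda v(z_0)\le 0$. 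Second, if $\phi$ is $C^2$ near $z_0$, then $\int_0^1\phi(z_0+re^{2\pi i\tau})\,d\tau-\phi(z_0)=(\partial^2\phi/\partial z\partial\bar z)(z_0)\,r^2+o(r^2)$, whence $\Lambda(v+\phi)(z_0)=\Lambda v(z_0)+(\partial^2\phi/\partial z\partial\bar z)(z_0)$.

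For the ``only if'' direction, suppose $u$ is plurisubharmonic and let $f$ be a holomorphic map of a neighborhood of $0\in\bC$ to $S$ mapping $\partial/\partial z$ to $\xi$. Then $v=u\circ f$ is subharmonic, hence satisfies the sub-mean value inequality $v(0)\le\int_0^1 v(re^{2\pi i\tau})\,d\tau$ for small $r$, so every difference quotient in (2.6) is $\ge 0$ and $\Lambda(u\circ f)(0)\ge 0$. As this holds for every such $f$, passing to the infimum gives $\xi\bar\xi u\ge 0$.

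For the ``if'' direction, assume $\xi\bar\xi u\ge 0$ for all $\xi\in T^{1,0}S$; it suffices to show that $v=u\circ f$ is subharmonic for an arbitrary holomorphic $f:\Delta\to S$ with $\Delta\subset\bC$ open. For $z_0\in\Delta$ the map $g(z)=f(z_0+z)$ is holomorphic near $0$ and maps $\partial/\partial z$ to $\xi:=df_{z_0}(\partial/\partial z)$, so by translation invariance of (2.6), $\Lambda v(z_0)=\Lambda(u\circ g)(0)\ge\xi\bar\xi u\ge 0$; thus $\Lambda v\ge 0$ throughout $\Delta$. Now given a closed disc $\overline D=\{|z-z_c|\le\rho\}\subset\Delta$ and $w$ continuous on $\overline D$, harmonic in $D$, with $v\le w$ on $\partial D$, suppose for contradiction that $\sup_D(v-w)>0$. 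For every small $\var>0$ the function $\psi_\var=v-w+\var(|z-z_c|^2-\rho^2)$ is $\le 0$ on $\partial D$ but still has positive supremum over $\overline D$, hence, being u.s.c. on the compact set $\overline D$, attains its maximum there at an interior point $z_\var\in D$. By the first remark $\Lambda\psi_\var(z_\var)\le 0$, while by the second remark (with $w$ harmonic, so $\partial^2 w/\partial z\partial\bar z=0$, and $\partial^2(|z-z_c|^2)/\partial z\partial\bar z=1$) we get $\Lambda\psi_\var(z_\var)=\Lambda v(z_\var)+\var$. Hence $\Lambda v(z_\var)\le-\var<0$, contradicting $\Lambda v\ge 0$. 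Therefore $v\le w$ in $D$; since $\overline D$ and $w$ were arbitrary, $v$ is subharmonic, and $u$ is plurisubharmonic.

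The only step with genuine content is the passage from ``$\Lambda v\ge 0$ everywhere'' to ``$v$ subharmonic'', a classical fact in the spirit of Privalov and Rad\'o that is also implicit in \cite{S}. I expect this to be the main (though still routine) obstacle. The point to watch is that (2.6) involves a $\limsup$, not a $\liminf$: this is harmless because at an interior maximum \emph{all} the difference quotients, not just their $\limsup$, are nonpositive, and the quadratic perturbation $\var(|z-z_c|^2-\rho^2)$ converts the resulting non-strict inequality into a strict contradiction. Everything else is bookkeeping, together with the observation that replacing $\Lambda(u\circ f)(0)$ by the infimum over all discs tangent to $\xi$ only strengthens the hypothesis $\xi\bar\xi u\ge 0$ and so causes no difficulty.
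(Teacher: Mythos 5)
Your proof is correct, and at the one step that carries real content it takes a genuinely different route from the paper. The paper disposes of the ``only if'' direction exactly as you do, reduces the converse to an open $S\subset\bC$, and then simply cites Saks \cite{S} (building on Littlewood \cite{L}) for the implication ``$\Lambda u\ge 0$ everywhere $\Rightarrow$ $u$ subharmonic.'' You instead prove that implication from scratch: the quadratic perturbation $\var(|z-z_c|^2-\rho^2)$ together with the interior maximum of an u.s.c.\ function, the observation that at such a maximum every normalized mean-value difference is $\le 0$, and the exact additivity $\Lambda(v+\phi)=\Lambda v+\partial^2\phi/\partial z\partial\bar z$ for $C^2$ perturbations. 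This works precisely because of the $r^{-2}$ normalization in (2.6): the perturbation shifts $\Lambda$ by exactly $+\var$ at every point, turning the non-strict inequality from the $\limsup$ into a strict contradiction, as you note. What each approach buys: the paper's citation actually invokes a strictly stronger classical theorem (Littlewood--Saks handle the unnormalized condition $\limsup_{r\to 0}\bigl(\int_0^1u(z_0+re^{2\pi i\tau})d\tau-u(z_0)\bigr)\ge 0$, where the perturbation trick fails because the quadratic contributes only $O(r^2)$, which is swallowed), whereas your argument is elementary and self-contained, using nothing beyond the Taylor expansion of the circle mean of a smooth function and the standard equivalence of subharmonicity with the harmonic-majorant comparison on discs. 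Both are complete proofs of the lemma as stated; yours has the advantage of not outsourcing the key analytic fact.
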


\begin{proof}The ``only if'' direction follows from the definition of plurisubharmonicity. It suffices to prove the converse when $S\subset \bC$ is an open subset. So $\Lambda u\ge 0$ in $S$, which according to Saks implies $u$ is subharmonic. (Saks obtained his result from a slightly weaker theorem of Littlewood, see \cite{L,S}. I am greatful to A. Weitsman, who pointed me to those papers.)
\end{proof}

Returning to a sheaf ${\cF}\to S$ of $\cO$-modules endowed with a metric $p$, let $\xi\in T^{1,0}_sS$ and $v\in {\cF}^s={\cF}_s/({\cJ}(s){\cF})_s$. If $p^s(v)\ne 0$, define $K_\xi(v)\in [-\infty,\infty]$ by
\begin{equation}
-K_\xi(v)=2 \inf \{\xi\bar\xi \log p(\varphi)(s):\varphi\in v\}.
\end{equation}
When $({\cF}, p)$ is the sheaf of sections of a holomorphic Hilbert bundle $E\to S$, endowed with a hermitian metric of class $C^2$, (2.3) in conjunction with the Cauchy--Schwarz inequality shows
$$K_\xi(v)=h(v,R(\xi,\bar\xi)v)/h(v,v).$$
(We identified ${\cF}^s\approx E_s$.) Since Griffiths positivity/negativity of a hermitian metric means $K_\xi(v)>0$, repectively $<0$, for $\xi, v\ne 0$, we call $K_\xi(v)$ for general $({\cF},p)$ the Griffiths curvature.

We can now formulate our main result, a generalization of Theorem 1.1:

\begin{theorem}Let ${\cF},{\cF'}\to S$ be sheaves of $\cO$-modules endowed with metrics $p,p'$ and let $K,K'$ denote their Griffiths curvature. Suppose $({\cF},p)$ is the sheaf of holomorphic sections of a holomorphic Hilbert bundle with a hermitian metric of class $C^2$. If a homomorphism $\alpha:({\cF},p)\to ({\cF'},p')$ decreases curvature in the sense that for $s\in S$
$$K'_\xi(\alpha^sv)\le K_\xi(v),\qquad\xi\in T^{1,0}_s S,\  v\in {\cF}^s,\quad p'(\alpha^s v)\ne 0,$$
then $\log\|\alpha\|$ is plurisubharmonic.
\end{theorem}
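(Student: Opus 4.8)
The plan is to verify the criterion of Lemma~2.3 for $u:=\log\|\alpha\|$. Since $\|\alpha\|$ is continuous, $u$ is upper semicontinuous with values in $[-\infty,\infty)$, so it is enough to prove $\xi\bar\xi u(s_0)\ge 0$ for every $s_0\in S$ and $\xi\in T^{1,0}_{s_0}S$; by the definition of $\xi\bar\xi$ this amounts to $\Lambda(u\circ f)(0)\ge 0$ for every holomorphic map $f$ of a neighbourhood of $0\in\bC$ into $S$ with $f(0)=s_0$ and $f_*(\partial/\partial z)=\xi$ (the case $\xi=0$, and the closed set where $\|\alpha\|=0$, being trivial). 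Under the canonical isometry ${\cF}^{s}\cong E_s$ the definition of $\|\alpha\|$ gives $\|\alpha\|(s)=\sup\{p'^{s}(\alpha^{s}v):v\in E_s,\ h(v,v)=1\}$, so that $u(s_0)=\sup\{\log p'^{s_0}(\alpha^{s_0}v):h(v,v)=1\}$.

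First I would build good test sections. Choose a neighbourhood $V$ of $s_0$ over which $E$ is holomorphically trivialized, and for each $v\in E_{s_0}$ with $h(v,v)=1$ let $\varphi_v$ be the holomorphic section of $E$ over $V$ which in this trivialization is affine, equals $v$ at $s_0$, and satisfies $\nabla\varphi_v(s_0)=0$ (the affine part being chosen, independently of $v$, so as to cancel the connection form at $s_0$); thus $\varphi_v$ depends linearly on $v$, and, shrinking $V$ and using $h\in C^2$, one gets bounds on $\varphi_v$ and on $\nabla\varphi_v$ uniform in $v$, in particular $h(\varphi_v,\varphi_v)\ge\tfrac12$ on $V$. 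Put
\[
g_v=\log p'(\alpha\varphi_v)-\tfrac12\log h(\varphi_v,\varphi_v)\qquad\text{on }V .
\]
By the definition of the operator norm $g_v\le u$ on $V$, while $g_v(s_0)=\log p'^{s_0}(\alpha^{s_0}v)$, so $\sup_{h(v,v)=1}g_v(s_0)=u(s_0)$. The key estimate is that for $s\in V$ and $\eta\in T^{1,0}_s S$
\[
\eta\bar\eta\,g_v(s)\ \ge\ -\tfrac12\,N_v(\eta,s),
\]
where $N_v(\eta,s)\ge 0$ is the second, manifestly nonnegative, term in (2.3) for $\varphi_v$ and $\eta$. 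Indeed $\eta\bar\eta\log h(\varphi_v,\varphi_v)(s)=-K_\eta(\varphi_v(s))+N_v(\eta,s)$ by (2.3); $\eta\bar\eta\log p'(\alpha\varphi_v)(s)\ge-\tfrac12 K'_\eta(\alpha^{s}\varphi_v(s))$ by the definition (2.6) of $K'$ (this inequality being vacuous at the points, forming a closed set, where $p'(\alpha\varphi_v)$ vanishes); the identity $\eta\bar\eta(w_1-w_2)=\eta\bar\eta w_1-\eta\bar\eta w_2$ holds when $w_2$ is of class $C^2$; and $K'_\eta(\alpha^{s}\varphi_v(s))\le K_\eta(\varphi_v(s))$ by the decreasing-curvature hypothesis. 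Since $\nabla\varphi_v(s_0)=0$ and $\nabla\varphi_v$ is uniformly $C^1$, one has $0\le N_v(\eta,s)\le C\,|\eta|^2 d(s)^2$ on $V$, with $C$ independent of $v$ and $d(s)$ the coordinate distance from $s$ to $s_0$.

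Next I would absorb $N_v$ by a correction. The coordinate function $\psi:=A\,d(\cdot)^4$ is smooth near $s_0$, vanishes to order four there, and satisfies $\eta\bar\eta\psi(s)\ge 2A\,d(s)^2|\eta|^2$; so choosing $A\ge C/2$, which is independent of $v$, makes $\eta\bar\eta(g_v+\tfrac12\psi)(s)\ge 0$ for all $\eta$ and all $s$ in a fixed neighbourhood $V_1$ of $s_0$. As $g_v+\tfrac12\psi$ is upper semicontinuous, Lemma~2.3 shows it is plurisubharmonic on $V_1$. Now fix $\xi\ne 0$ and an admissible $f$ as above, with $f$ mapping a small disc $V_1'$ about $0$ into $V_1$. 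Then $(g_v+\tfrac12\psi)\circ f$ is subharmonic near $0$, and its sub-mean value inequality at $0$, together with $\psi(s_0)=0$, gives for small $r$
\[
\frac1{2\pi}\int_0^{2\pi}(g_v\circ f)(re^{i\theta})\,d\theta\ \ge\ g_v(s_0)-\frac1{4\pi}\int_0^{2\pi}(\psi\circ f)(re^{i\theta})\,d\theta .
\]
Because $\psi\circ f$ vanishes to order four at $0$, the last integral is $O(r^4)$, and it is independent of $v$. Combining with $g_v\circ f\le u\circ f$ and passing to the supremum over $\{h(v,v)=1\}$ therefore yields $\frac1{2\pi}\int_0^{2\pi}(u\circ f)(re^{i\theta})\,d\theta\ge u(s_0)-O(r^4)$; dividing by $r^2$ and letting $r\to 0$ gives $\Lambda(u\circ f)(0)\ge 0$, as required.

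The step I expect to be the main obstacle is precisely this passage to the supremum over $v$. In finite rank --- as in Berndtsson's situation, where one may in addition take $h$ flat, so that $\nabla\varphi_v\equiv 0$, $N_v\equiv 0$, and each $g_v$ is already plurisubharmonic --- one would simply take a supremum of plurisubharmonic functions. Here, however, $E$ may have infinite rank, the supremum defining $\|\alpha^{s_0}\|$ need not be attained, no single $g_v$ touches $u$ at $s_0$, and a naive supporting-function argument breaks down: the defect $u(s_0)-g_v(s_0)$, divided by $r^2$, is unbounded as $r\to 0$. What rescues the argument is that the decreasing-curvature hypothesis, together with the $\tfrac12\psi$ correction that swallows the second fundamental form term $N_v$, promotes each $g_v$ to a mean-value inequality with an error that is $O(r^4)$ and --- crucially --- uniform in $v$; this uniformity is exactly what lets the supremum be interchanged with it. The remaining points --- uniformity of the bound on $N_v$ (resting on $h\in C^2$ and the uniformly smooth, affine choice of $\varphi_v$), the behaviour at points where $p'(\alpha\varphi_v)$ or $\|\alpha\|$ vanishes, and the implication (plurisubharmonic)$\Rightarrow(\eta\bar\eta\ge0)$ from Lemma~2.3 invoked at the end --- are routine.
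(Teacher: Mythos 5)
Your proposal is correct and follows essentially the same route as the paper: affine test sections chosen so that $\nabla\varphi_v(s_0)=0$ with bounds uniform in $v$, the curvature-decreasing hypothesis combined with (2.3) to control $\Lambda\log\bigl(p'(\alpha\varphi_v)/p(\varphi_v)\bigr)$ up to a second-fundamental-form error of size $O(|s-s_0|^2)$, absorption of that error by a $|s-s_0|^4$ correction, and a sub-mean-value inequality whose $O(r^4)$ defect is uniform in $v$ so that the supremum over test sections can be taken before letting $r\to 0$. The point you single out as the crux --- uniformity in $v$ permitting the interchange with the supremum --- is exactly the mechanism of the paper's argument.
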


The theorem has the potential to say something about homomorphisms $({\cF},p)\to({\cF'}, p')$ even when $({\cF},p)$ is not the sheaf of sections of a hermitian Hilbert bundle, but can be locally approximated by such. Approximable sheaves arise naturally in certain direct image constructions, and in \cite{B1} Berndtsson already successfully applied approximation to study curvature of such direct images.

\begin{corollary}Let $({\cF},p)$ and $({\cF'},p')$ be as in Theorem 2.4. Suppose for $s\in S$ and $\xi\in T^{1,0}_s S$
$$\inf\{K_\xi(v):v\in {\cF}^s, p^s(v)\ne 0\}\ge\sup\{K'_\xi(v'):v'\in {\cF'}^s, p'^s(v')\ne 0\}.$$
Then the metric $||\cdot||$ on the sheaf ${\textbf{\Hom}}(({\cF},p),({\cF'},p'))$ has seminegative Griffiths curvature.
\end{corollary}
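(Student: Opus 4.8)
The plan is to derive Corollary 2.5 directly from Theorem 2.4, the key point being that the uniform curvature inequality in the hypothesis forces \emph{every} homomorphism to decrease curvature in the pointwise sense required by that theorem. First I would unwind, via the definition (2.6) of Griffiths curvature, what it means for the metric $\|\cdot\|$ on ${\textbf{\Hom}}(({\cF},p),({\cF'},p'))$ to have seminegative curvature: fix $s\in S$, $\xi\in T^{1,0}_sS$, and a class $\beta\in{\textbf{\Hom}}^{s}={\textbf{\Hom}}_s/({\cJ}(s){\textbf{\Hom}})_s$ of nonzero seminorm; I must show $\xi\bar\xi\log\|\gamma\|(s)\ge 0$ for every germ $\gamma\in\beta$. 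By the definition of the sheaf ${\textbf{\Hom}}$, such a $\gamma$ is the germ at $s$ of a genuine homomorphism $\alpha:({\cF}|U,p)\to({\cF'}|U,p')$ over some neighborhood $U\ni s$ with $\|\alpha\|$ continuous, and $\|\gamma\|$ is the germ of the function $\|\alpha\|$. So, by Lemma 2.3, it suffices to prove that $\log\|\alpha\|$ is plurisubharmonic on $U$ for every such $\alpha$.

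For this I would apply Theorem 2.4 with $U$ in the role of $S$: then $({\cF}|U,p)$ is the sheaf of holomorphic sections of the hermitian holomorphic Hilbert bundle $(E|U,h|U)$ and $({\cF'}|U,p')$ is a sheaf with a metric, so the structural hypotheses are met. What remains is to verify that $\alpha$ decreases curvature in the exact sense of that theorem: $K'_\xi(\alpha^{t}v)\le K_\xi(v)$ whenever $t\in U$, $\xi\in T^{1,0}_tS$, $v\in{\cF}^{t}$ and $p'^{t}(\alpha^{t}v)\ne 0$. I would first observe that then $p^{t}(v)\ne 0$ as well, since otherwise $p'^{t}(\alpha^{t}v)\le\|\alpha\|(t)\,p^{t}(v)=0$; hence $v$ is an admissible competitor in the infimum below and $\alpha^{t}v$ in the supremum, and the hypothesis of the corollary, applied at $t$ and in the direction $\xi$, gives
$$K'_\xi(\alpha^{t}v)\ \le\ \sup_{w'}K'_\xi(w')\ \le\ \inf_{w}K_\xi(w)\ \le\ K_\xi(v),$$
where the supremum runs over $w'\in{\cF'}^{t}$ with $p'^{t}(w')\ne 0$ and the infimum over $w\in{\cF}^{t}$ with $p^{t}(w)\ne 0$. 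This is precisely the curvature-decreasing hypothesis of Theorem 2.4, so $\log\|\alpha\|$ is plurisubharmonic on $U$; the reduction of the first paragraph then gives that $\|\cdot\|$ on ${\textbf{\Hom}}(({\cF},p),({\cF'},p'))$ has seminegative Griffiths curvature.

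I do not expect a genuine obstacle; the whole content is the passage between the two formulations. The one point deserving care is that the corollary postulates a \emph{uniform} (in essence, spectral-gap type) separation of the curvatures of $({\cF},p)$ and $({\cF'},p')$, whereas Theorem 2.4 wants the \emph{matched} pointwise comparison of $K_\xi(v)$ with $K'_\xi(\alpha^{t}v)$ for the image vector; it is the sandwiching through the supremum and the infimum, together with the bookkeeping remark that the image vector being nonzero makes $v$ itself a legitimate competitor, that bridges the two. The remaining ingredients---identifying a germ of a homomorphism with a local representative, and converting plurisubharmonicity of $\log\|\alpha\|$ into the nonnegativity of $\xi\bar\xi\log\|\alpha\|$ via Lemma 2.3---are routine.
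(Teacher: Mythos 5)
Your proposal is correct and is essentially the deduction the paper intends: the corollary is stated without proof as an immediate consequence of Theorem 2.4, and your argument---reducing seminegativity of the curvature of $\|\cdot\|$ via (2.6) and Lemma 2.3 to plurisubharmonicity of $\log\|\alpha\|$ for each local representative $\alpha$, and verifying the curvature-decreasing hypothesis by sandwiching $K'_\xi(\alpha^t v)\le\sup K'_\xi\le\inf K_\xi\le K_\xi(v)$ (after noting $p^t(v)\ne 0$)---is exactly that deduction.
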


\section{The proof of Theorem 2.4}

We can assume that $S$ is an open subset of $\bC$, and $({\cF},p)$ is the sheaf of holomorphic sections of a trivial Hilbert bundle $E=S\times W\to S$. Here $(W, \langle\ ,\ \rangle)$ is a Hilbert space. The metric $h$ on $E$ is induced by a $C^2$ map $P:S\to {\text{End}} \, W$,
$$h((s,w_1),(s,w_2))=\langle P(s)w_1,w_2\rangle,\quad s\in S,\quad w_1,w_2\in W.$$
The connection form $\theta$ evaluated on $\xi=\partial/\partial s$ is $A=P^{-1}\partial P/\partial s$. Thus if $f:S\to W$ is holomorphic, and a section $\varphi$ of $E$ is given by $\varphi(s)=(s,f(s))$, then
\begin{equation}
\nabla_\xi\varphi=\big(\cdot,\frac{\partial f}{\partial s}+Af\big).
\end{equation}
Given $s_0\in S$ and a nonzero $w\in W$, take $f(s)=w+(s_0-s)A(s_0)w$, so that 
$$
\frac{\partial f(s)}{\partial s} +A(s)f(s)=\big(A(s)-A(s_0)+(s_0-s)A(s)A(s_0)\big)w.
$$
The corresponding section $\varphi$ satisfies
\begin{equation}
|p(\varphi)(s)-p(\varphi)(s_0)|, \; p(\nabla_\xi\varphi)(s)\le C|s-s_0|p(\varphi)(s_0),
\end{equation}
if $|s-s_0|<\var$, with some $\var, C$ that depend only on $s_0$ but not on $w$, i.e. not on $\varphi$. Assuming $p'(\alpha\varphi)(s)\ne 0$, by (2.6)
$$
2\Lambda(\log p'(\alpha\varphi))(s)\ge -K'_\xi(\alpha^s\varphi^s).
$$
On the other hand, $2\log p(\varphi)=\log h(\varphi,\varphi)$ is $C^2$ in a neighborhood of $s_0$, and by (2.3), (3.2)
$$
2\Lambda(\log p(\varphi))(s)=2\frac{\partial^2(\log p(\varphi))(s)}{\partial s\partial{\bar s}}\le -K_{\xi}(\varphi^s)+C_1|s-s_0|^2,
$$
if $|s-s_0|<\var_1=\min(\var, 1/2 C)$, and $C_1=2C^2$.
Therefore by the assumption of the theorem when $|s-s_0|<\var_1$
$$
\Lambda\big\{\log\frac{p'(\alpha\varphi)(s)}{p(\varphi)(s)}+C_1|s-s_0|^4\big\}\ge 0,
$$
i.e., the function in braces is subharmonic according to Lemma 2.3. 
Since $\|\alpha\|\geq p'(\alpha\varphi)/ p(\varphi)$, for $0<r<\var_1$
\begin{equation}
\begin{aligned}
\int^1_0\log\|\alpha\|(s_0+re^{2\pi i\tau})d\tau &\geq  \int^1_0\log \frac{p'(\alpha\varphi)}{p(\varphi)}(s_0+re^{2\pi i\tau})d\tau\\
 &\geq \log \frac{p'(\alpha\varphi)(s_0)}{p(\alpha\varphi)(s_0)}-C_1r^4.
\end{aligned} 
\end{equation}

Assuming $\|\alpha\|(s_0)>0$, $\cF$ has sections $\psi$ for which $\log\bigl(p'(\alpha\psi)(s_0)/p(\psi)(s_0)\bigr)$ is arbitrarily close to $\log\|\alpha\|(s_0)$. Since both $p(\psi)(s_0)$ and $p'(\alpha\psi)(s_0)$ depend only on $\psi^{s_0}$, these sections $\psi$  can be chosen from among the $\varphi$ introduced above. Hence (3.3) implies
$$
\int^1_0\log\|\alpha\|(s_0+re^{2\pi i\tau})d\tau\ge\log\|\alpha\|(s_0)-C_1r^4,
$$
and so $\Lambda\log\|\alpha\|(s_0)\ge 0$. This being true for all $s_0\in S$ as long as $\|\alpha\|(s_0)>0$, by Lemma 2.3 $\log\|\alpha\|$ is indeed subharmonic.

It is possible to generalize Theorem 2.4 to $({\cF},p)$ coming from a Banach, rather than Hilbert, bundle $\pi:E\to S$, endowed with a metric $q:E\to [0,\infty)$. By analyzing what is needed to adapt the proof given above one obtains a perhaps unattractive condition for $(E,q)$. If 
$(W,\|\ \|)$ is a Banach space, write $W^0=W\setminus\{0\}$, and for a Banach bundle $E$ write $E^0$ for $E$ minus its zero section. If the norm $\|\ \|$ is of class $C^1$ on $W^0$, it determines a map 
\begin{equation}
W^0\ni w\mapsto\partial\|\ \|\big|_w \in W^*.
\end{equation}
The dual $E^*$ of $E$ is also a holomorphic Banach bundle, and similarly, if the metric $q$ of $E$ is $C^k$ on $E^0$, applying (3.4) fiberwise we obtain a map $\gamma:E^0\to E^*$, of class $C^{k-1}$.

Our condition on $(E,q)$ will be first, that $q$ be $C^2$ on $E^0$; second, that for $s_0\in S$ there be a neighborhood $U\subset S$, a Banach space $W$ with unit sphere $\Sigma\subset W$, and a surjective $C^1$ map 
$$F:U\times W^0\to \pi^{-1}U\setminus {\text {zero}\,\,\,  \text{section}}$$
that is uniformly $C^1$ on $U\times\Sigma$; and third, that $F(\cdot, w)$ should be holomorphic for $w\in W^0$, and 
\[
\bar\partial_s\gamma\bigl(F(s,w)\bigr)=0\qquad \text{at}\quad s=s_0.
\]

Thus Theorem 2.4 remains true for $({\cF},p)$ the sheaf of sections of a holomorphic Banach bundle $(E,q)\to S$ that satisfies the above condition. The role of $\varphi$ in the proof given above would be played by $F(\cdot,w)$ in the proof of the generalization.

For example, let $(X, \mu)$ be a measure space, $a\in[2,\infty)$, and $W=L^a(X,\mu)$. Let furthermore $\rho:S\times X\to\bR$ be a function such that $\rho(s,\cdot)$ is measurable for all $s\in S$, $\rho(\cdot,x)$ is twice continuously differentiable for fixed $x\in X$, and the derivatives up to order $2$ (when computed in coordinate charts on $S$) are uniformly bounded and uniformly equicontinuous on compact subsets of $S$  as $x\in X$ varies. Then on the trivial bundle $E=S\times W\to S$
\[
q(s,w)=\left(\int_X|w(x)|^a e^{\rho(s,x)}d\mu(x)\right)^{1/a}
\]
defines a metric that satisfies the above conditions. Assuming $S\subset \bC^n$, a little calculation shows that one can choose
\[
F(s,w)=w+\frac{\partial \rho}{\partial s}\bigg|_{s=s_0}(s-s_0)\frac wa.
\]

\end{document}